\newtheorem{theorem}{Theorem}
\newtheorem{example}[theorem]{Example}
\newtheorem{proposition}[theorem]{Proposition}
\newenvironment{proof}[1][Proof]{\noindent\textbf{#1.} }{\ \rule{0.5em}{0.5em}}
\begin{document}

\title{\textbf{Non-spurious solutions to discrete boundary value problems
through variational methods}}
\author{ Marek Galewski \thanks{%
Lodz University of Technology, Poland, email: marek.galewski@p.lodz.pl} \and %
Ewa Schmeidel\thanks{%
University of Bialystok, Poland, email: eschmeidel@math.uwb.edu.pl}}
\maketitle
\date{}

\begin{abstract}
\noindent Using direct variational method we consider the existence of
non-spurious solutions to the following Dirichlet problem $\ddot{x}\left(
t\right) =f\left( t,x\left( t\right) \right) $, $x\left( 0\right) =x\left(
1\right) =0 $ where $f:\left[ 0,1\right] \times \mathbb{R} \rightarrow 
\mathbb{R}$ is a jointly continuous function convex in $x$ which does not
need to satisfy any further growth conditions.

\noindent \textbf{Keywords:} non-spurious solutions, convexity, direct
variational method, discrete equation.

\noindent \textbf{MSC 2000:} 39A12, 39A10, 34B15.
\end{abstract}



\section{Introduction}

In this note we consider non-spurious solutions by using a critical point
theory to the following Dirichlet problem 
\begin{equation}
\begin{array}{l}
\ddot{x}\left( t\right) =f\left( t,x\left( t\right) \right) \\ 
x\left( 0\right) =x\left( 1\right) =0%
\end{array}
\label{par}
\end{equation}
where $f:\left[ 0,1\right] \times \mathbb{R} \rightarrow \mathbb{R}$ is a
jointly continuous function. Further we will make precise what is meant by
the solutions to (\ref{par}).

The existence of non-spurious solutions is very important for the
applications since in such a case one can approximate solutions to (\ref{par}%
) with a sequence of solutions to a suitably chosen family of discrete
problems and one is sure that this approximation converges to the solution
of the original problem, see \cite{kelly}. There are many ways in which a
boundary value problem can be discretized and the existence and multiplicity
theory on difference equations is very vast, see for example \cite{agrawal}, 
\cite{candito1}, \cite{guojde}, \cite{MRT}. However, as underlined by
Agarwal, \cite{agarvalpaper}, there are no clear relations between
continuous problems and their discretization which means that both problems
can be solvable, but the approximation approaches nothing but the solution
to the continuous problem or else, the discrete problem is solvable and the
continuous one is not or the other way round. Let us recall his examples:

\begin{example}
The continuous problem $\ddot{x}(t) + \frac{\pi^2}{n^2}x(t)=0$, $x(0)=x(n)=0$
has an infinite number of solutions $x(t)= c \sin \frac{\pi t}{n}$ ($c$ is
arbitrary) whereas its discrete analogue $\Delta^2x(k)+\frac{\pi^2}{n^2}%
x(k)=0$, $x(0)=x(n)=0$ has only one solution $x(k)\equiv 0$. The problem $%
\ddot{x}(t)+\frac{\pi^2}{4n^2}x(t)=0$, $x(0)=0$, $x(n)=1$ has only one
solution $x(t)=\sin\frac{\pi t}{2n}$, and its discrete analogue $\Delta^2
x(k)+ \frac{\pi^2}{4n^2}x(k)=0$, $x(0)=0$, $x(n)=1$ also has one solution.
The continuous problem $\ddot{x}(t)+4\sin^2\frac{\pi}{2n}x(t)=0$, $x(0)=0$, $%
x(n)=\varepsilon \neq 0$ has only one solution $x(t)= \varepsilon \frac{\sin[%
(2 \sin\frac{\pi}{2n})t]}{\sin[(2 \sin\frac{\pi}{2n})n]}$, whereas its
discrete analogue $\Delta^2x(k)+4\sin^2\frac{\pi}{2n}x(k)=0$, $x(0)=0$, $%
x(n)=\varepsilon \neq 0$ has no solution.
\end{example}

Thus, the nature of the solution changes when a continuous boundary value
problem is being discretized. Moreover, two-point boundary value problems
involving derivatives lead to multipoint problems in the discrete case.

The above remarks and examples show that steal it is important to consider
both continuous and discrete problems simultaneously and investigate
relation between solutions which is the key factor especially when the
existence part follows by standard techniques.

There have been some research in this case addressing mainly problems whose
solutions where obtained by the fixed point theorems and the method of lower
and upper solutions, \cite{rech1}, \cite{rachunkowa2}, \cite{thomsontisdell}%
. In this submission we are aiming at using critical point theory method,
namely the direct method of the calculus of variations (see for example \cite%
{Ma} for a nice introduction to this topic) in order to show that in this
setting one can also obtain suitable convergence results. The advance over
works mentioned is that we can have better growth conditions imposed on $f$
at the expense of not putting derivative of $x$ in $f$. As expected we will
have to get the uniqueness of solutions for the associated discrete problem,
which is not always easy to be obtained, see \cite{uni2}.

In \cite{kelly} following \cite{gaines}, it is suggested which family of
difference equations for $n\in \mathbb{N}$ is to be chosen when
approximating problem (\ref{par}). For $a$, $b$ such that $a<b<\infty $, $%
a\in \mathbb{N}\cup \{0\}$, $b\in \mathbb{N}$ we denote $\mathbb{N}%
(a,b)=\{a,a+1,...,b-1,b\}$. For a fixed $n\in \mathbb{N}$ the nonlinear
difference equation with Dirichlet boundary conditions is given as follows
for $k\in \mathbb{N}(0,n-1)$ 
\begin{equation}
\Delta ^{2}x\left( k-1\right) =\frac{1}{n^{2}}f\left( \frac{k}{n},x\left(
k\right) \right) ,\,\,\,x\left( 0\right) =x\left( n\right) =0.
\label{diffequ}
\end{equation}%
Here $\Delta $ is the forward difference operator, i.e. $\Delta x\left(
k-1\right) =x\left( k\right) -x\left( k-1\right) $ and we see that $\Delta
^{2}x\left( k-1\right) =x\left( k+1\right) -2x\left( k\right) +x\left(
k-1\right) $. Assume that both continuous boundary value problem (\ref{par})
and for each fixed $n\in \mathbb{N}$ discrete boundary value problem (\ref%
{diffequ}) are uniquely solvable by, respectively $x$ and $x^{n}=\left(
x^{n}(k)\right) $. Moreover, let there exist two constants $Q,N>0$
independent of $n$ and such that 
\begin{equation}
n|\Delta x^{n}(k-1)|\leq Q\text{ and }|x^{n}(k)|\leq N  \label{ewa1}
\end{equation}%
for all $k\in \mathbb{N}(0,n)$ and all $n\geq n_{0}$, where $n_{0}$ is fixed
(and arbitrarily large). Lemma 9.2. from \cite{kelly} says that for some
subsequence $x^{n_{m}}=\left( x^{n_{m}}(k)\right) $ of $x^{n}$ it holds 
\begin{equation}
\lim_{m\rightarrow \infty }\max_{0\leq k\leq n_{m}}\left\vert
x^{n_{m}}\left( k\right) -x\left( \frac{k}{n_{m}}\right) \right\vert =0.
\label{ewa2}
\end{equation}%
In other words, this means that the suitable chosen discretization
approaches the given continuos boundary value problem. Such solutions to
discrete BVPs are called non-spurious in contrast to spurious ones which
either diverge or else converge to anything else but the solution to a given
continuous Dirichlet problem.

\section{Non spurious solutions for (\protect\ref{par})}

\subsection{The continuous problem}

In the existence part we apply variational methods. This means that with
problem under consideration we must associate the Euler action functional,
prove that this functional is weakly lower semicontinuous in a suitable
function space, coercive and at least G\^{a}teaux differentiable. Given this
three conditions one knows that at least a weak solution to problem under
consideration exists whose regularity can further be improved with known
tools. Such scheme, commonly used within the critical point theory is well
described in the first chapters of \cite{Ma}.

The solutions to (\ref{par}) will be investigated in the space $%
H_{0}^{1}\left( 0,1\right) $ consisting of absolutely continuous functions
satisfying the boundary conditions and with a.e. derivative being integrable
with square. Such a solution is called a weak one, i.e. a function $x\in
H_{0}^{1}\left( 0,1\right) $ is a weak $H_{0}^{1}\left( 0,1\right) $
solution to (\ref{par}), if 
\begin{equation*}
\int_{0}^{1}\dot{x}\left( t\right) \dot{v}\left( t\right)
dt+\int_{0}^{1}f\left( t,x\left( t\right) \right) v\left( t\right) dt=0
\end{equation*}%
for all $v\in H_{0}^{1}\left( 0,1\right) $. The classical solution to (\ref%
{par}) is then defined as a function $x:$ $\left[ 0,1\right] \rightarrow 
\mathbb{R}$ belonging to $H_{0}^{1}\left( 0,1\right) $ such that $\ddot{x}$
exists a.e. and $\ddot{x}\in L^{1}\left( 0,\pi \right) $. Since $f$ is
jointly continuous, then it is known from the Fundamental Theorem of the
Calculus of Variations, see \cite{Ma}, that $x$ is in fact twice
differentiable with classical continuous second derivative. Thus $x\in
H_{0}^{1}\left( 0,1\right) \cap C^{2}\left( 0,1\right) $.

Let $F\left( t,x\right) =\int_{0}^{x}f\left( t,s\right) ds$ for $\left(
t,x\right) \in \left[ 0,1\right] \times \mathbb{R}$. We link solutions to (%
\ref{par}) with critical points to a $C^{1}$ functional $J:H_{0}^{1}\left(
0,1\right) \rightarrow \mathbb{R}$ given by 
\begin{equation*}
J\left( x\right) =\frac{1}{2}\int_{0}^{1}\dot{x}^{2}\left( t\right)
dt+\int_{0}^{1}F\left( t,x\left( t\right) \right) dt.
\end{equation*}%
Let us examine $J$ for a while. Due to the continuity of $f$ functional $J$
is well defined. Recall that the norm in $H_{0}^{1}\left( 0,1\right) $ reads 
\begin{equation*}
\left\Vert x\right\Vert =\sqrt{\int_{0}^{1}\dot{x}^{2}\left( t\right) dt}.
\end{equation*}%
Then we see $\frac{1}{2}\int_{0}^{1}\dot{x}^{2}\left( t\right) dt=\frac{1}{2}%
\left\Vert x\right\Vert ^{2}$ is a $C^{1}$ functional by standard facts. Its
derivative is a functional on $H_{0}^{1}\left( 0,1\right) $ which reads 
\begin{equation*}
v\rightarrow \int_{0}^{1}\dot{x}\left( t\right) \dot{v}\left( t\right) dt.
\end{equation*}%
Concerning the nonlinear part we see that for any fixed $v\in
H_{0}^{1}\left( 0,1\right) $ (which is continuous of course) function $%
\varepsilon \rightarrow \int_{0}^{1}F\left( t,x\left( t\right) +\varepsilon
v\left( t\right) \right) dt$ (where the integral we can treat as the Riemann
one) due to the Leibnitz differentiation formula under integral sign is $%
C^{1}$ and the derivative of $\int_{0}^{1}F\left( t,x\left( t\right) \right)
dt$ is a functional on $H_{0}^{1}\left( 0,1\right) $ which reads 
\begin{equation*}
v\rightarrow \int_{0}^{1}f\left( t,x\left( t\right) \right) v\left( t\right)
dt
\end{equation*}%
if we recall that $F\left( t,x\right) =\int_{0}^{x}f\left( t,s\right) ds$.
Since the above is obviously continuous in $x$ uniformly in $v$ form unit
sphere, we see that $J$ is in fact $C^{1}.$

Recall also Poincar\'{e} inequality $\int_{0}^{1}x^{2}\left( t\right) dt\leq 
\frac{1}{\pi ^{2}}\int_{0}^{1}\dot{x}^{2}\left( t\right) dt$ and Sobolev's
one $\max_{t\in \left[ 0,1\right] }\left\vert x\left( t\right) \right\vert
\leq \int_{0}^{1}\dot{x}^{2}\left( t\right) dt$.

We sum up the assumptions on the nonlinear term in (\ref{par}) since in
order to get the above mentioned observations continuity of $f$ is
sufficient. We assume that\newline
\textit{\textbf{H1}} $f:\left[ 0,1\right] \times \mathbb{R} \rightarrow 
\mathbb{R}$ is a continuous function such that $f\left( t,0\right) \neq 0$
for $t\in \left[ 0,1\right]$;\newline
\textit{\textbf{H2}} $f$ is nondecreasing in $x$ for all $t\in \left[ 0,1%
\right] $

\begin{proposition}
Assume that \textbf{H1} and \textbf{H2} are satisfied. Then problem (\ref%
{par}) has exactly one nontrivial solution.
\end{proposition}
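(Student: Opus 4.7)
The plan is to apply the direct method of the calculus of variations to the $C^{1}$ functional $J$ introduced above: I will show that $J$ is coercive, weakly lower semicontinuous, and strictly convex on $H_{0}^{1}(0,1)$, conclude that $J$ has a unique minimizer, identify it with the unique weak (hence classical) solution of (\ref{par}), and finally use H1 to rule out the trivial solution.

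First I would establish coercivity. Assumption \textbf{H2} says that $f(t,\cdot)$ is nondecreasing, so $F(t,\cdot)$ is convex for every $t\in[0,1]$, with $f(t,0)$ a subgradient at $0$ and $F(t,0)=0$. The subgradient inequality gives
\begin{equation*}
F(t,x)\ge F(t,0)+f(t,0)\,x=f(t,0)\,x\qquad\text{for all }t,x.
\end{equation*}
Integrating, using Cauchy--Schwarz and Poincar\'e, I obtain $\int_{0}^{1}F(t,x(t))\,dt\ge -C\|x\|$ for a constant $C$ depending only on $\|f(\cdot,0)\|_{L^{2}}$. Hence $J(x)\ge \tfrac{1}{2}\|x\|^{2}-C\|x\|\to\infty$ as $\|x\|\to\infty$.

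Next, weak lower semicontinuity. The quadratic part $\tfrac{1}{2}\|x\|^{2}$ is weakly lower semicontinuous on the Hilbert space $H_{0}^{1}(0,1)$. For the nonlinear part, I would use the compact embedding $H_{0}^{1}(0,1)\hookrightarrow C([0,1])$: any weakly convergent sequence $x_{n}\rightharpoonup x$ converges uniformly, so by joint continuity of $F$ and the dominated convergence theorem (the $x_{n}$ are uniformly bounded in $C([0,1])$) one gets $\int_{0}^{1}F(t,x_{n}(t))\,dt\to\int_{0}^{1}F(t,x(t))\,dt$. Thus $J$ is weakly lower semicontinuous, and on the reflexive space $H_{0}^{1}(0,1)$ the direct method yields a minimizer $\bar x$. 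Since $J$ is $C^{1}$ (already shown in the excerpt), $\bar x$ is a critical point, i.e.\ a weak solution; the Fundamental Theorem of the Calculus of Variations together with the continuity of $f$ promotes $\bar x$ to a classical solution in $H_{0}^{1}(0,1)\cap C^{2}(0,1)$.

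For uniqueness I exploit strict convexity: $x\mapsto\tfrac{1}{2}\|x\|^{2}$ is strictly convex on the Hilbert space, and $x\mapsto\int_{0}^{1}F(t,x(t))\,dt$ is convex by \textbf{H2}, so $J$ is strictly convex. A strictly convex $C^{1}$ functional has at most one critical point (every critical point of a convex $C^{1}$ functional is a global minimum, and strict convexity forbids two distinct global minima); combined with existence this gives exactly one solution of (\ref{par}). Finally, the trivial function $x\equiv 0$ is excluded by \textbf{H1}: $\ddot 0(t)=0\neq f(t,0)$, so the unique solution is nontrivial. I do not anticipate a real obstacle here; the one point requiring a little care is the coercivity step, since \textbf{H2} gives no growth control on $F$ from below except through the subgradient inequality, so the argument hinges on that convexity trick combined with Poincar\'e.
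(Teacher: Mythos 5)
Your proposal is correct and follows essentially the same route as the paper: coercivity via the subgradient inequality $F(t,x)\ge f(t,0)\,x$ combined with Cauchy--Schwarz and Poincar\'e, weak lower semicontinuity of the nonlinear term via the compact embedding of $H_{0}^{1}(0,1)$ into $C([0,1])$ and dominated convergence, then the direct method, and finally nontriviality from \textbf{H1}. If anything, you are slightly more careful than the paper on the uniqueness step, since you explicitly derive strict convexity of $J$ from the strictly convex quadratic part plus the convex nonlinear part, whereas the paper asserts uniqueness of the minimizer from convexity alone.
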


\begin{proof}
Firstly, we consider the existence part. Note that by Weierstrass Theorem
there exists $c>0$ such that 
\begin{equation*}
\left\vert f\left( t,0\right) \right\vert \leq c\text{ for all }t\in \left[
0,1\right] .
\end{equation*}%
Since $f$ is nondecreasing in $x$ \textit{\textbf{H2}} it follows that $F$
is convex. Since $F\left( t,0\right) =0$ for all $t\in \left[ 0,1\right] $
we obtain from the well known inequality 
\begin{equation}
F(t,x)=F(t,x)-F(t,0)\geq f\left( t,0\right) x\geq -\left\vert f\left(
t,0\right) x\right\vert   \label{aaa}
\end{equation}%
valid for any $x$ and all for all $t\in \left[ 0,1\right] $. We observe that
from (\ref{aaa}) we get 
\begin{equation}
F\left( t,x\right) \geq -c\left\vert x\right\vert \text{ for all }t\in \left[
0,1\right] \text{ and all }x\in \mathbb{R}.  \label{estimF}
\end{equation}%
Hence for any $x\in H_{0}^{1}\left( 0,1\right) $ we see by Schwartz and
Poincar\'{e} inequality 
\begin{equation*}
\int_{0}^{1}F\left( t,x\left( t\right) \right) dt\geq
-c\int_{0}^{1}\left\vert x\left( t\right) \right\vert dt\geq -\frac{c}{\pi }%
\left\Vert x\right\Vert .
\end{equation*}%
Therefore%
\begin{equation}
\begin{array}{l}
J\left( x\right) \geq \frac{1}{2}\left\Vert x\right\Vert ^{2}-\left\vert
c\right\vert \left\Vert x\right\Vert .%
\end{array}
\label{Ju_}
\end{equation}%
Hence from (\ref{Ju_}) we obtain that $J$ is coercive. Note that $\frac{1}{2}%
\left\Vert x\right\Vert ^{2}$ is obviously w.l.s.c. on $H_{0}^{1}\left(
0,1\right) $. Next, by the Arzela-Ascoli Theorem and Lebesgue Dominated
Convergence, see these arguments in full detail in \cite{Ma} in the proof of
Theorem 1.1 we see that $x\rightarrow \int_{0}^{1}F\left( t,x\left( t\right)
\right) dt$ is weakly continuous. Thus $J$ is weakly l.s.c. as a sum of a
w.l.s.c. and weakly continuous functionals. Since $J$ is $C^{1}$ and convex
functional it has exactly one argument of a minimum which is necessarily a
critical point and thus a solution to (\ref{par}). Putting $x=0$ in (\ref%
{par}) one see that we have a contradiction, so any solution is nontrivial.
\end{proof}

In order to get the existence of nontrivial solution to (\ref{par}) it would
suffice to assume that $f\left( t_{0},0\right) \neq 0$ for some $t_{0}\in %
\left[ 0,1\right] $ but since we need to impose same conditions on discrete
problem it is apparent that our assumption is more reasonable. Moreover,
there is another way to prove the weak lower semincontinuity of $J$, namely
show that $J$ is continuous. Then it is weakly l.s.c. since it is convex.
However, in proving continuity of $J$ on $H_{0}^{1}\left( 0,1\right) $ one
uses the same arguments.

\subsection{The discrete problem}

Now we turn the discretization of (\ref{par}), i.e. to problem (\ref{diffequ}%
). 
considered in the $n$-dimensional Hilbert space $E$ consisting of functions $%
x:\mathbb{N}(0,n)\rightarrow \mathbb{R}$ such that $x(0)=x(n)=0$. Space $E$
is considered with the following norm%
\begin{equation}
\left\Vert x\right\Vert =\left( \sum\limits_{k=1}^{n}|{\Delta }%
x(k-1)|^{2}\right) ^{\frac{1}{2}}.  \label{norm_operator}
\end{equation}%
We can also consider $E$ with the following norm 
\begin{equation*}
\left\Vert u\right\Vert _{0}=\left( \sum\limits_{k=1}^{n}|u(k)|^{2}\right) ^{%
\frac{1}{2}}.
\end{equation*}%
Since $E$ is finite dimensional there exist constants $c_{b}=\frac{1}{2}$ \
and $c_{a}=\left( \left( n-1\right) n\right) ^{1/2}$ such that 
\begin{equation}
c_{b}\left\Vert u\right\Vert \leq \left\Vert u\right\Vert _{0}\leq
c_{a}\left\Vert u\right\Vert \text{ for all }u\in E.  \label{c_a_c_b}
\end{equation}%
Solutions to (\ref{diffequ}) correspond in a $1-1$ manner to the critical
points to the following $C^{1}$ functional $\mathcal{I}:E\rightarrow \mathbb{%
R}$ 
\begin{equation*}
\mathcal{I}(x)=\sum\limits_{k=1}^{n}\tfrac{1}{2}|\Delta x(k-1)|^{2}+\frac{1}{%
n^{2}}\sum\limits_{k=1}^{n-1}F(\frac{k}{n},x(k))
\end{equation*}%
with $F$ defined as before. This means that 
\begin{equation*}
\frac{d}{dx}\mathcal{I}(x)=0\text{ if and only if }x\text{ satisfies (\ref%
{diffequ}).}
\end{equation*}%
Now we do not need to introduce the notion of the weak solution that is why
we have only one type of variational solution. We know that by the discrete
Schwartz Inequality by (\ref{estimF}) and by (\ref{c_a_c_b}) 
\begin{equation}
\begin{array}{l}
\mathcal{I}(x)\geq \frac{1}{2}\Vert x\Vert ^{2}-\frac{1}{n^{2}}\left\vert
c\right\vert \sqrt{n}\left( \sum\limits_{k=1}^{n-1}\left\vert x\left(
k\right) \right\vert ^{2}\right) ^{1/2} \\ 
\\ 
\geq \frac{1}{2}\Vert x\Vert ^{2}-\left\vert c\right\vert \frac{\sqrt{n-1}}{n%
}\left\Vert x\right\Vert \geq \frac{1}{2}\Vert x\Vert ^{2}-\left\vert
c\right\vert \left\Vert x\right\Vert .%
\end{array}
\label{relcoer}
\end{equation}%
Hence $\mathcal{I}(x)\rightarrow +\infty $ as $\Vert x\Vert \rightarrow
+\infty $ and we are in position to formulate the following

\begin{proposition}
\label{solvability_diff_equ}Assume that \textbf{H1}, \textbf{H2} hold. Then
problem (\ref{diffequ}) has exactly one nontrivial solution.
\end{proposition}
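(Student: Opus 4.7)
The plan is to mirror the proof of the continuous case (the previous proposition), with the simplifications afforded by the fact that $E$ is finite dimensional, so there is no need to worry about weak topologies or weak lower semicontinuity.

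First I would handle \emph{existence}. Since $F$ is continuous, the functional $\mathcal{I}$ is continuous on the finite dimensional space $E$. The coercivity estimate (\ref{relcoer}) already established in the excerpt shows $\mathcal{I}(x)\to +\infty$ as $\|x\|\to +\infty$. A continuous coercive function on a finite dimensional normed space attains its infimum, and this minimizer is a critical point of the $C^1$ functional $\mathcal{I}$, hence by the $1$--$1$ correspondence it yields a solution to (\ref{diffequ}).

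Next I would handle \emph{uniqueness}. By hypothesis \textbf{H2} the function $f$ is nondecreasing in $x$, so $F(t,\cdot)$ is convex on $\mathbb{R}$ for every $t\in[0,1]$. Therefore the sum $\frac{1}{n^{2}}\sum_{k=1}^{n-1}F\bigl(\tfrac{k}{n},x(k)\bigr)$ is a convex function of $x\in E$. The quadratic part $\frac{1}{2}\sum_{k=1}^{n}|\Delta x(k-1)|^{2}=\frac{1}{2}\|x\|^{2}$ is strictly convex since (\ref{norm_operator}) is a genuine Hilbert norm on $E$. Hence $\mathcal{I}$ is strictly convex on $E$, so it has at most one critical point, and combining with the existence part above, exactly one.

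Finally I would check that this unique solution is \emph{nontrivial}. Substituting $x\equiv 0$ into (\ref{diffequ}) forces $0=\frac{1}{n^{2}}f\bigl(\tfrac{k}{n},0\bigr)$ for every $k\in\mathbb{N}(1,n-1)$, contradicting \textbf{H1} which requires $f(t,0)\neq 0$ for all $t\in[0,1]$. Therefore the unique solution is not the zero sequence.

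I do not foresee a real obstacle: the hardest point is essentially book-keeping, namely verifying that $\mathcal{I}$ is strictly convex (the quadratic term being strict, the nonlinear term merely convex, hence the sum strict), which is what converts existence of a minimizer into uniqueness of critical points and therefore of solutions to (\ref{diffequ}).
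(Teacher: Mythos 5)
Your proposal is correct and follows exactly the route the paper intends: the paper states this proposition without a separate proof, having just established the coercivity estimate (\ref{relcoer}), and relies on the same direct-method-plus-convexity argument used for the continuous problem (minimizer of a coercive continuous functional on the finite-dimensional space $E$, uniqueness from convexity with the strictly convex quadratic term, nontriviality from \textbf{H1}). Your explicit remark that strict convexity of $\frac{1}{2}\Vert x\Vert^{2}$ plus convexity of the nonlinear sum yields strict convexity of $\mathcal{I}$ is in fact slightly more careful than the paper's own wording in the continuous case.
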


\subsection{Main result}

\begin{theorem}
\label{first convergence theorem}Assume that conditions \textbf{H1}, \textbf{%
H2} are satisfied. Then there exists $x\in H_{0}^{1}\left( 0,1\right) \cap
C^{2}\left( 0,1\right) $ which solves uniquely (\ref{par}) and for each $%
n\in \mathbb{N}$ there exists $x^{n}$ which solves uniquely (\ref{diffequ}).
Moreover, there exists a subsequence $x^{n_{m}}$ of $x^{n}$ such that
inequalities (\ref{ewa2}) are satisfied. 
\end{theorem}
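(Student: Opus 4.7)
The plan is to invoke Propositions 1 and 2 for existence and uniqueness of $x$ and of each $x^n$, and then to verify the two uniform estimates in (\ref{ewa1}) so that Lemma 9.2 of \cite{kelly} applies and produces the subsequence realizing (\ref{ewa2}).

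The main obstacle will be the uniform sup-norm bound $|x^n(k)|\leq N$ independent of $n$. The coercivity estimate (\ref{relcoer}) on its own only yields $\|x^n\|\leq 2|c|$, which through plain Cauchy--Schwarz gives only the useless $|x^n(k)|\leq\sqrt{n}\,\|x^n\|$. I would sharpen (\ref{relcoer}) using the discrete Poincar\'{e} inequality $\|x\|_{0}\leq (2\sin(\pi/(2n)))^{-1}\|x\|$ together with $\sum_{k=1}^{n-1}|x(k)|\leq \sqrt{n-1}\,\|x\|_{0}$ and the already established $F(t,x)\geq -c|x|$; these combine to give $\mathcal{I}(x)\geq \tfrac{1}{2}\|x\|^{2}-\tfrac{c}{\pi\sqrt{n}}\|x\|$ for $n$ large, and since $\mathcal{I}(x^{n})\leq \mathcal{I}(0)=0$ by minimality, one obtains $\|x^{n}\|=O(n^{-1/2})$. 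The pointwise estimate $|x^{n}(k)|^{2}\leq (k(n-k)/n)\|x^{n}\|^{2}\leq (n/4)\|x^{n}\|^{2}$, proved by applying Cauchy--Schwarz separately to $x^n(k)=\sum_{j=1}^{k}\Delta x^n(j-1)$ and $x^n(k)=-\sum_{j=k+1}^{n}\Delta x^n(j-1)$, then collapses to a uniform bound $|x^{n}(k)|\leq N$.

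With $|x^n|$ uniformly bounded, the second estimate in (\ref{ewa1}) is a consequence of the equation rather than the variational setup. Setting $M:=\max\{|f(t,y)|:t\in [0,1],\,|y|\leq N\}$, finite by \textbf{H1}, equation (\ref{diffequ}) immediately gives $|\Delta^{2}x^{n}(k-1)|\leq M/n^{2}$ for every admissible $k$. Since the Dirichlet condition forces $\sum_{k=1}^{n}\Delta x^{n}(k-1)=x^{n}(n)-x^{n}(0)=0$, a discrete intermediate-value argument produces some index $k_{0}$ at which two consecutive values of $\Delta x^{n}$ change sign, so $|\Delta x^{n}(k_{0}-1)|\leq |\Delta^{2}x^{n}(k_{0}-1)|\leq M/n^{2}$; telescoping $\Delta x^{n}(k-1)-\Delta x^{n}(k_{0}-1)$ through the bounded second differences then yields $|\Delta x^{n}(k-1)|\leq M/n^{2}+(n-1)M/n^{2}\leq 2M/n$, i.e.\ $n|\Delta x^{n}(k-1)|\leq Q:=2M$ for every $k$. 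With both inequalities of (\ref{ewa1}) in hand, Lemma 9.2 of \cite{kelly} delivers (\ref{ewa2}).
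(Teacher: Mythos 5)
Your proposal is correct and follows essentially the same route as the paper: both derive $\Vert x^{n}\Vert =O(n^{-1/2})$ from $\mathcal{I}(x^{n})\leq \mathcal{I}(0)=0$ combined with the $n$-dependence of the constants in (\ref{relcoer}) (the paper's intermediate bound there already carries the factor $\sqrt{n-1}/n$ that you re-derive via the discrete Poincar\'{e} inequality), and both convert this into the uniform bound $\vert x^{n}(k)\vert \leq N$ through the discrete Sobolev estimate $\max_{k}\vert x^{n}(k)\vert \leq \frac{\sqrt{n+1}}{2}\Vert x^{n}\Vert $ before invoking Lemma 9.2 of \cite{kelly}. The only deviation is that for the bound $n\vert \Delta x^{n}(k-1)\vert \leq Q$ the paper simply cites Lemma 9.3 of \cite{kelly}, whereas you prove that step from scratch with the sign-change and telescoping argument on the second differences; this is correct and merely makes the proof self-contained.
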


\begin{proof}
We need to show that there exist two constants independent of $n$ such that
inequalities (\ref{ewa1}) hold. 
where $n_{0}$ is fixed. Then Lemma 9.2. from \cite{kelly} provides the
assertion of the theorem. In our argument we use some observations used in
the investigation of continuous dependence on parameters for ODE, see \cite%
{LedzewiczWalczak}. Fix $n$. By Proposition \ref{solvability_diff_equ},
there exists $x^{n}$ solving uniquely (\ref{diffequ}) and which is an
argument of a minimum to $\mathcal{I}$ such that it holds that $\mathcal{I}%
(x^{n})\leq \mathcal{I}(0)=0$. Thus relation (\ref{relcoer}) leads to the
inequality 
\begin{equation*}
\frac{1}{2}\Vert x^{n}\Vert\leq \left\vert c\right\vert \frac{\sqrt{n-1}}{n}.
\end{equation*}
Since $\max_{k\in \mathbb{N}(0,n)}\left\vert x^{n}\left( k\right)
\right\vert \leq \frac{\sqrt{n+1}}{2}\left\Vert x^{n}\right\Vert$ we get
that for all $k\in \mathbb{N}(0,n)$ 
\begin{equation*}
\left\vert x^{n}\left( k\right) \right\vert \leq 2\left\vert c\right\vert 
\frac{\sqrt{n-1}}{n}\frac{\sqrt{n+1}}{2}\leq \left\vert c\right\vert =N.
\end{equation*}
By Lemma 9.3 in \cite{kelly} we now obtain that there is a constant $Q$ such
that condition 
\begin{equation*}
n\vert \Delta x^{n}(k-1) \vert \leq Q \text{ and }\vert x^{n}(k) \vert \leq N
\end{equation*}
for all $k\in \mathbb{N}(0,n)$ and all $n\in \mathbb{N}$ is satisfied. This
means that the application of Lemma 9.2 from \cite{kelly} finishes the proof.
\end{proof}

\section{Final comments and examples}

In this section we provide the examples of nonlinear terms satisfying our
assumptions and we will investigate the possibility of replacing the
convexity assumption imposed on $F$ with some weaker requirement as well as
we comment on exisiting results in the literature.

Concerning the examples of nonlinear terms any nondecreasing $f$ is of order
bounded or unbounded, see 

\begin{enumerate}
\item[a)] $f\left( t,x\right) =g\left( t\right) \exp \left( x-t^{2}\right)$;

\item[b)] $f\left( t,x\right) =g\left( t\right) \arctan \left( x\right)$;

\item[c)] $f\left( t,x\right) =g\left( t\right) x^{3}+\exp \left(
x-t^{2}\right)$,
\end{enumerate}

where $g$ is any lower bounded continuous function with positive values. 

In view of remarks contained in \cite{Ma} functional $J$ can be written  
\begin{equation*}
J\left( x\right) =\left( \frac{1}{2}\int_{0}^{1}\dot{x}^{2}\left( t\right)
dt-\frac{a}{2\pi }\int_{0}^{1}x^{2}\left( t\right) dt\right) 
\end{equation*}%
\begin{equation*}
+\left( \int_{0}^{1}F\left( t,x\left( t\right) \right) dt+\frac{a}{2\pi }%
\int_{0}^{1}x^{2}\left( t\right) dt\right) .
\end{equation*}%
Then functional 
\begin{equation*}
x\rightarrow \left( \frac{1}{2}\int_{0}^{1}\dot{x}^{2}\left( t\right) dt-%
\frac{a}{2\pi }\int_{0}^{1}x^{2}\left( t\right) dt\right) 
\end{equation*}%
is strictly convex as long as $a\in \left( 0,1\right) $. Note that the first
eigenvalue of the differential operator $-\frac{d^{2}}{dt^{2}}$ with
Dirichlet boundary conditions on $\left[ 0,1\right] $ is $\frac{1}{\pi }$
(note this is the best constant in Poincar\'{e} inequality). Hence we can
relax convexity assumption $F$ by assuming that 
\begin{equation*}
x\rightarrow F\left( t,x\right) +\frac{a}{2\pi }x^{2}
\end{equation*}%
is convex for any $t\in \left[ 0,1\right] $. Then $F_{1}\left( t,x\right)
=F\left( t,x\right) +\frac{a}{2\pi }x^{2}$ satisfies (\ref{estimF}). 

The natural question arises if similar procedure is possible as far as the
discrete problem (\ref{diffequ}) is concerned. However there is one big
problem here since the first eigenvalue for $-\Delta ^{2}$ reads $\lambda
_{1}=2-2\cos \left( \frac{\pi }{n+1}\right) $ and of course $\lambda
_{1}\rightarrow 0$ as $n\rightarrow \infty $. This means that the above idea
would not work, since we cannot find $a$ for all $n$ idependent of $n$ (for
each $n$ such $a=a\left( n\right) $ exists ). 

A comparison with existing results is also in order. The only papers
concerning the existence of non-spurious solutions are \cite{rech1}, \cite%
{rachunkowa2}, \cite{thomsontisdell} which follow ideas developed in \cite%
{gaines} and which were mentioned already in the Introduction. We not only
use different methods, namely critical point theory, but also we are not
limited as far as the growth is concerned since in sources mentioned $f$ is
sublinear. However, we could not incorporate the derivative of $x$ into the
nonlinear term. This is not possible by variational approach but could be
made possible by connecting variational methods with Banach contraction
principle and it shows that the research concerning the existence of
non-spurious solutions with critical point approach can be further developed.

We cannot use sublinear growth as in sources mentioned since it does not
provide the inequality 
\begin{equation}
F\left( t,x\right) -F\left( t,0\right) \geq f\left( t,0\right) x\text{ for
all }t\in \left[ 0,1\right] \text{ and all }x\in \mathbb{R}.  \label{ccccc}
\end{equation}%
With our approach inequality (\ref{ccccc}) is essential in proving the
required estimations which lead to the existence of non-spurious solutions.
This is shown by the below remarks where direct calculations are performed.

The relevant growth condition reads\newline
\textit{\textbf{H2a} There exist constants} $a,b>0$ and $\gamma \in \left[
0,1\right) $ such that 
\begin{equation}
f\left( t,x\right) \leq a+b\left\vert x\right\vert ^{\gamma }\text{ for all }%
t\in \left[ 0,1\right] \text{ and all }x\in \mathbb{R}.
\label{cond_unboud_below}
\end{equation}%
By (\ref{cond_unboud_below}) for all $t\in \left[ 0,1\right] $ and all $x\in 
\mathbb{R}$ it holds 
\begin{equation*}
F\left( t,x\right) \leq a\left\vert x\right\vert +\frac{b}{\gamma +1}%
\left\vert x\right\vert ^{\gamma +1}.
\end{equation*}%
Since $F\left( t,x\right) \geq -\left\vert F\left( t,x\right) \right\vert $
we see by Schwartz, Holder and Poincar\'{e} inequality for any $x\in
H_{0}^{1}\left( 0,1\right) $ 
\begin{equation*}
\int_{0}^{1}F\left( t,x\left( t\right) \right) dt\geq -c_{1}\left\Vert
x\right\Vert -c_{2}\left\Vert x\right\Vert ^{\gamma +1},
\end{equation*}%
where $c_{1}=a$ and $c_{2}>0$ (the exact value of $c_{2}$ is not important
since $\gamma +1<2$ and functional $J$ is coercive disregarding of the value
of $c_{2}$. Then problem (\ref{par}) has at least one solution by the direct
method of the calculus of variations.

In order to consider problem (\ref{diffequ}) we need to perform exact
calculations since in this case, in view of the convergence Theorem \ref%
{first convergence theorem}, the precise values of constants are of utmost
importance. In case of \textit{\textbf{H2a}} from H\"{o}lder's inequality
and (\ref{c_a_c_b}) we get 
\begin{equation*}
\begin{array}{ll}
\sum\limits_{k=1}^{n-1}\vert u(k)\vert^{\gamma +1} & =\sum%
\limits_{k=1}^{n-1}\vert u(k)\vert^{\gamma +1} \cdot 1 \\ 
&  \\ 
& \leq \left( \sum\limits_{k=1}^{n-1}\vert u(k)\vert^{\gamma +1}\vert^{\frac{%
2}{\gamma +1} }\right) ^{\frac{\gamma +1}{2}}\left(
\sum\limits_{k=1}^{n-1}\vert1\vert^{\frac{1}{1- \frac{\gamma +1}{2}}}\right)
^{1-\frac{2}{\gamma +1}} \\ 
&  \\ 
& =\left( n-1\right) ^{\frac{1-\gamma }{2}}\left\Vert u\right\Vert
_{0}^{\gamma +1}\leq \left( \left( n-1\right) n\right) ^{\frac{\gamma +1}{2}%
}\left( n-1\right) ^{\frac{1-\gamma }{2}}\left\Vert u\right\Vert ^{\gamma +1}
\\ 
&  \\ 
& =\left( n-1\right) n^{\frac{\gamma +1}{2}}\left\Vert u\right\Vert ^{\gamma
+1}\leq \left( n-1\right) n\left\Vert u\right\Vert ^{\gamma +1}.%
\end{array}%
\end{equation*}
Thus 
\begin{equation*}
\frac{1}{n^{2}}\frac{b}{\gamma +1}\sum\limits_{k=1}^{n-1}\vert
u(k)\vert^{\gamma +1}\leq \frac{b}{\gamma +1}n^{\frac{\gamma -1}{2}%
}\left\Vert u\right\Vert ^{\gamma +1}
\end{equation*}

Hence by the above calculations and (\ref{relcoer}) we get for any $x\in E$ 
\begin{equation}
\mathcal{I}(x)\geq \frac{1}{2}\Vert x\Vert ^{2}-\left\vert a\right\vert 
\frac{\sqrt{n-1}}{n}\left\Vert x\right\Vert -\frac{b}{\gamma +1}n^{\frac{%
\gamma -1}{2}}\left\Vert x\right\Vert ^{\gamma +1}.  \label{rel_add_coer}
\end{equation}%
Thus $\mathcal{I}(x)\rightarrow +\infty $ as $\Vert x\Vert \rightarrow
+\infty .$ By Lemma 9.2. from \cite{kelly}, we need to show that (\ref{ewa1}%
) holds. Fix $n$. Since $\mathcal{I}(x^{n})\leq \mathcal{I}(0)=0$, the
relation (\ref{rel_add_coer}) leads to the inequality 
\begin{equation}
\frac{1}{2}\Vert x^{n}\Vert \leq \left\vert a\right\vert \frac{\sqrt{n-1}}{n}%
+\frac{b}{\gamma +1}n^{\frac{\gamma -1}{2}}\left\Vert x^{n}\right\Vert
^{\gamma }.  \label{ineq}
\end{equation}%
Since $\gamma <1$ we see $n^{\frac{\gamma -1}{2}}\rightarrow 0$. Thus there
is some $n_{0}$ that for all $n\geq n_{0}$ it holds $\frac{b}{\gamma +1}n^{%
\frac{\gamma -1}{2}}<\frac{1}{4}$. Take $n\geq n_{0}$. Let us consider two
cases, namely $\left\Vert x^{n}\right\Vert \leq 1$ and $\left\Vert
x^{n}\right\Vert >1$. In case $\left\Vert x^{n}\right\Vert >1$ we get from (%
\ref{ineq}) that 
\begin{equation*}
\frac{1}{2}\Vert x^{n}\Vert \leq \left\vert a\right\vert \frac{\sqrt{n-1}}{n}%
+\frac{1}{4}\left\Vert x^{n}\right\Vert 
\end{equation*}%
Recall $\max_{k\in \mathbb{N}(0,n)}\left\vert x^{n}\left( k\right)
\right\vert \leq \frac{\sqrt{n+1}}{2}\left\Vert x^{n}\right\Vert $ we get
that for all $k\in \mathbb{N}(0,n)$ 
\begin{equation*}
\left\vert x\left( k\right) \right\vert \leq 4\left\vert a\right\vert \frac{%
\sqrt{n-1}}{n}\frac{\sqrt{n+1}}{2}\leq 2\left\vert a\right\vert =N.
\end{equation*}%
For the case $\left\Vert x^{n}\right\Vert \leq 1$ we however we cannot
proceed without (\ref{ccccc}). The reason is what while on space $E$
disregarding of $n$ the sequence is norm bounded by $1$ (uniformely in $n$)
in norm given by (\ref{norm_operator}), this is not the case with the
max-norm where it is unbounded as $n\rightarrow \infty $.

\end{document}